\documentclass[11pt,a4paper]{article}
\usepackage[utf8]{inputenc}
\usepackage{amsmath,amssymb,amsfonts,amsthm}
\usepackage{physics}
\usepackage{graphicx}
\usepackage{hyperref}
\usepackage{xcolor}
\usepackage{geometry}
\usepackage{booktabs}
\numberwithin{equation}{section}
\usepackage{lipsum}

\theoremstyle{plain}
\newtheorem{thm}{Theorem}[section]

\newtheorem{defn}[thm]{Definition}
\theoremstyle{remark}
\newtheorem{rmk}[thm]{Remark}

\title{\textbf{Formal Stability of Tetrahedral Non-Zonal Flows on the Sphere}}
\author{Yuri Cacchiò\\
\scriptsize{Faculty of Mathematics, University of Vienna, Oskar-Morgenstern-Platz 1, 1090 Vienna, Austria.}\\ \href{mailto:yuri.cacchio@univie.ac.at}{\scriptsize{yuri.cacchio@univie.ac.at}}}
\date{\today}

\begin{document}

\maketitle

\begin{abstract}
We investigate the formal stability of finite-amplitude non-zonal flows bifurcating from the trivial state in the unforced 2D Euler equations on the sphere. To bypass the degeneracy of the spherical Laplacian and filter out the low-frequency Fj{\o}rtoft instabilities, we restrict the functional space to the invariant subspace of the tetrahedral symmetry group. Using Arnold's Energy-Casimir method, we prove that the linearized elliptic operator derived via Liapunov-Schmidt reduction acts as the Hessian of the conserved functional. By tracking the critical eigenvalue along the bifurcating branches via the Crandall-Rabinowitz theorem, we establish a relation between the bifurcation topology and formal stability. Applying this framework to four distinct geophysical profile functions, we demonstrate that subcritical polynomial and supercritical sine-Gordon flows achieve a negative-definite second variation, that is, their formal stability. In contrast, subcritical sinh-Gordon and supercritical Liouville exponential flows generate saddle points, making them unstable. This classification identifies the specific nonlinear interactions required for the persistence of large-scale coherent waves in planetary atmospheres.

\vspace{0.5cm}
\noindent \textbf{Keywords:} 2D Euler equations; formal stability; Crandall-Rabinowitz stability; Energy-Casimir method; tetrahedral symmetry; non-zonal flows.
\end{abstract}

\tableofcontents

\section{Introduction}

The formation and propagation of large-scale coherent structures in planetary atmospheres are modeled by the incompressible 2D Euler equations on the rotating sphere $\mathbb{S}^2$ \cite{constantin2021modelling,holton2013introduction,taylor2016euler,vallis2017atmospheric}. Let the physical domain be parameterized by longitude $\varphi \in [0, 2\pi)$ and latitude $\theta \in \left(-\frac{\pi}{2},\frac{\pi}{2}\right)$, with the polar axis denoted by $x_3 = \sin\theta$. In a rotating frame with constant angular velocity $\omega$, the evolution of the stream function $\psi(t, \varphi, \theta)$ is governed by the conservation of vorticity \cite{majda2002vorticity},
\begin{equation}
    \partial_t (\Delta\psi) + J(\psi, \Delta\psi + 2\omega x_3) = 0.
\end{equation}
Here, the linear term $2\omega x_3$ represents the Coriolis parameter. The relative scalar vorticity is given by the spherical Laplace-Beltrami operator, written as
\begin{equation}
    \Delta\psi = \frac{1}{\cos\theta} \frac{\partial}{\partial\theta} \left( \cos\theta \frac{\partial\psi}{\partial\theta} \right) + \frac{1}{\cos^2\theta} \frac{\partial^2\psi}{\partial\varphi^2}
\end{equation}
and the spherical Jacobian is defined as 
\begin{equation}
    J(f,g) = \frac{1}{\cos\theta} (\partial_\varphi f \partial_\theta g - \partial_\theta f \partial_\varphi g).
\end{equation}
Steady geophysical flows, such as atmospheric jet streams, are characterized by stationary condition $\partial_t = 0$. This assumption requires the stream function and the vorticity to commute, 
\begin{equation}
    J(\psi, \Delta\psi + 2\omega x_3) = 0.
\end{equation}
Following Arnold's argument \cite{arnold1998} and the rigorous formulations of Constantin and Johnson \cite{constantin2017large}, the absence of Jacobian implies that, wherever $\nabla\psi \neq 0$, the absolute vorticity is a functional of the stream function. This relation ensures the existence of a sufficiently smooth function $F$ satisfying the nonlinear elliptic equation
\begin{equation} \label{eq:CG_euler}
    -\Delta\psi + F(\psi) = 2\omega x_3.
\end{equation}
The simplest solutions to \eqref{eq:CG_euler} are \textit{zonal flows}, which depend only on the latitudinal coordinate $x_3$ \cite{pedlosky1987geophysical,vallis2017atmospheric}. However, at critical parameter thresholds, these symmetric states can undergo spontaneous symmetry breaking, generating two-dimensional, non-zonal \textit{Rossby-Haurwitz waves} \cite{haurwitz1940motion,rossby1939relations} via local bifurcation. The classical stability of these planetary waves has been investigated through numerical and spectral methods \cite{baines1976stability, hoskins1973stability}.

Analyzing this symmetry-breaking mechanism directly in the rotating frame is challenging because of the asymmetry introduced by the Coriolis force. To isolate the nature of the bifurcation, we adopt the reduction strategy of Constantin and Germain \cite{constantin2022} by passing to the non-rotating limit ($\omega = 0$). In this regime, the problem simplifies to the unforced semilinear equation
\begin{equation}\label{eq:stationary_euler}
    -\Delta\psi + F(\lambda, \psi) = 0 \quad \text{on } \mathbb{S}^2,
\end{equation}
where the non-linearity is parameterized by the bifurcation variable $\lambda \in \mathbb{R}$, under the trivial-branch condition $F(\lambda, 0) = 0$. 

Notice that this irrotational simplification does not compromise physical relevance. Constantin and Germain \cite{constantin2022} established that any stationary solution $\psi_0(\varphi, \theta)$ of the non-rotating problem \eqref{eq:stationary_euler} corresponds to a non-zonal traveling wave on the rotating sphere via the transformation 
\begin{equation}
    \psi_\omega(\varphi, \theta, t) = \psi_0(\varphi + \omega t, \theta) + \omega x_3.
\end{equation}
This mapping ensures that our analysis of stationary states at $\omega=0$ characterizes planetary waves propagating westward with a phase velocity $c = \omega$ relative to the planetary surface.

A central problem in this framework is the stability of the finite-amplitude non-zonal flows emerging from \eqref{eq:stationary_euler}. It is a well-known consequence of Fj{\o}rtoft's theorem \cite{fjortoft1953changes, vallis2017atmospheric} and Arnold's classical nonlinear stability criteria \cite{arnold1969apriori,arnold1998} that stationary flows dominated by spherical harmonics of degree $l \ge 2$ act as saddle points in the energy-enstrophy invariant manifold. Consequently, these high-frequency structures are unstable to large-scale, low-frequency perturbations.

In a previous work \cite{yuri2026}, we investigated the emergence of these solutions from the trivial state. To bypass the $(2l+1)$-dimensional kernel degeneracy of the spherical Laplacian, which prohibits the direct application of classical local bifurcation theorems \cite{crandall1971bifurcation, kielhofer2012bifurcation}, we restricted the functional space to the invariant subspace of the tetrahedral symmetry group $\mathbf{T}$ using equivariant bifurcation theory \cite{golubitsky1988singularities}. This constraint filters out the large-scale asymmetric modes and collapses the null space to a single dimension spanned by the $l=3$ real tetrahedral harmonic $Y^*$. By applying a Liapunov-Schmidt reduction, we demonstrated that the topology of the resulting bifurcating branches, $\psi(\varepsilon) = \varepsilon Y^* + \mathcal{O}(\varepsilon^2)$, is strictly determined by the chosen non-linearity $F(\lambda, \psi)$ and the mass constraint. Specifically, we proved that generalized polynomial and sinh-Gordon models bifurcate subcritically, whereas sine-Gordon and Liouville exponential models bifurcate supercritically.

Having established the existence and the geometry of these tetrahedral non-zonal flows, the aim of the present paper is to evaluate their robustness against perturbations, a property we define as \textit{tetrahedral formal stability}. Because the geometric restriction to the tetrahedral subgroup projects out the unstable low-frequency modes $l=1, 2$, it provides a mathematical setting to test the stability of the $l=3$ bifurcating branches without the onset of Fj{\o}rtoft's instability.

Since the 2D Euler equations describe a conservative Hamiltonian system \cite{arnold1989mathematical}, stability cannot be established by treating the dynamics as a standard gradient flow. Instead, we evaluated it through the second variation of the Arnold Energy-Casimir functional. In this paper, we prove that, when restricted to the invariant subspace $X_{\mathbf{T}}$, the linearized elliptic operator resulting from the Liapunov-Schmidt reduction acts as the second variation of the Energy-Casimir functional. Employing the Crandall-Rabinowitz theorem \cite{crandall1973stability}, we trace the critical eigenvalue $\mu(\varepsilon)$ of the differential operator along the bifurcating curves. 

This spectral link allows us to relate the bifurcation direction to the definiteness of the restricted Energy-Casimir functional. By evaluating the polynomial, sine-Gordon, sinh-Gordon, and exponential tetrahedral flows, we demonstrate that formal stability is not governed by the bifurcation topology alone. Our analysis reveals that both subcritical and supercritical branches can achieve formal stability, depending on the nonlinear interactions governing the perturbation of the critical eigenvalue.

The organization of this paper is as follows. In Section \ref{sec:preliminaries}, we introduce the Arnold Energy-Casimir functional, define the criteria for formal stability on the restricted tetrahedral subspace, and state the main stability theorem. Section \ref{sec:proof} is devoted to the rigorous proof of the main result, utilizing a perturbative spectral expansion to link the geometric Hessian to the Crandall-Rabinowitz bifurcation framework. In Section \ref{sec:physical_models}, we apply this criterion to evaluate the formal stability of the four geophysical profile functions. Finally, Section \ref{sec:conclusions} provides a summary and discusses the physical implications of these stability classifications.

\section{Preliminaries and Main Result}\label{sec:preliminaries}

\subsection{Energy-Casimir Functional and Formal Stability}

In dissipative systems, the derivative of the stationary problem often acts as the linearized operator, making its spectrum sufficient to determine asymptotic linear stability. However, the 2D Euler equations describe a conservative, Hamiltonian fluid. The linearized operator does not correspond to the elliptic derivative $L$ emerging from the Liapunov-Schmidt reduction
\begin{equation}
    L = -\Delta + \partial_\psi F(\lambda, \psi_0)I.
\end{equation}
Consequently, the spectrum of $L$ alone does not yield information about perturbation growth. 

To link the spatial operator $L$ with stability, we employ Arnold's Energy-Casimir method. Specifically, we demonstrate that $L$ acts as the Hessian (the second variation) of the Energy-Casimir functional.

The non-rotating 2D Euler equations conserve the total kinetic energy,
\begin{equation}
    E[\psi] = \frac{1}{2} \iint_{\mathbb{S}^2} |\nabla \psi|^2 \, d\sigma = \frac{1}{2} \iint_{\mathbb{S}^2} \psi (-\Delta \psi) \, d\sigma,
\end{equation}
and an infinite family of Casimir invariants (generalized enstrophies) \cite{arnold1998},
\begin{equation}
    C[\psi] = \iint_{\mathbb{S}^2} \Phi(-\Delta \psi) \, d\sigma,
\end{equation}
where $\Phi$ is an arbitrary, sufficiently smooth function and $-\Delta \psi$ is the scalar vorticity.

Arnold's stability theorem relies on constructing the Energy-Casimir functional,
\begin{equation}\label{eq:H_C}
    H_C[\psi] = E[\psi] + C[\psi].
\end{equation}
A stationary flow $\psi_0$ is a formal relative equilibrium if it is a critical point of this functional, meaning its first variation vanishes $\delta H_C[\psi_0] = 0$. Computing the first variation with respect to an arbitrary perturbation $\delta \psi$ yields
\begin{equation}
    \delta H_C = \iint_{\mathbb{S}^2} \left( \Phi'(-\Delta \psi_0) + \psi_0 \right) (-\Delta \delta \psi) \, d\sigma = 0.
\end{equation}
For this condition to hold for any generic perturbation, we require
\begin{equation}
    \Phi'(-\Delta \psi_0) = -\psi_0.
\end{equation}
Recalling the stationary Euler equation \eqref{eq:stationary_euler}, we substitute this into the identity to obtain
\begin{equation}\label{eq:phi'}
    \Phi'(-F(\lambda, \psi_0)) = -\psi_0.
\end{equation}
To establish formal stability, we examine the second variation at the critical point $\psi_0$,
\begin{equation}
    \delta^2 H_C[\delta \psi] = \frac{1}{2} \iint_{\mathbb{S}^2} \left[ |\nabla \delta \psi|^2 + \Phi''(-\Delta \psi_0) (-\Delta \delta \psi)^2 \right] \, d\sigma.
\end{equation}
Differentiating \eqref{eq:phi'} with respect to $\psi_0$, we derive 
\begin{equation}
    \Phi''(-\Delta \psi_0) = \frac{1}{\partial_\psi F(\lambda, \psi_0)}.
\end{equation}
Substituting this into the second variation gives
\begin{equation}
    \delta^2 H_C[\delta \psi] = \frac{1}{2} \iint_{\mathbb{S}^2} \left[ |\nabla \delta \psi|^2 + \frac{1}{\partial_\psi F(\lambda, \psi_0)} (-\Delta \delta \psi)^2 \right] \, d\sigma.
\end{equation}
We rewrite the kinetic energy term using Green's first identity. The second variation becomes
\begin{equation}\label{eq:second_variation}
    \delta^2 H_C[\delta \psi] = \frac{1}{2} \iint_{\mathbb{S}^2} \left[ \delta \psi (-\Delta \delta \psi) + \frac{1}{\partial_\psi F(\lambda, \psi_0)} (-\Delta \delta \psi)^2 \right] \, d\sigma.
\end{equation}
We recover $L = -\Delta + \partial_\psi F(\lambda, \psi_0) I$ inside the integral,
\begin{align}\label{eq:second_variation_L}
    \delta^2 H_C[\delta \psi] &= \frac{1}{2} \iint_{\mathbb{S}^2} \frac{(-\Delta \delta \psi)}{\partial_\psi F(\lambda, \psi_0)} \Big( \partial_\psi F(\lambda, \psi_0) \delta \psi - \Delta \delta \psi \Big) \, d\sigma \nonumber \\
    &= \frac{1}{2} \iint_{\mathbb{S}^2} \frac{(-\Delta \delta \psi)}{\partial_\psi F(\lambda, \psi_0)} L (\delta \psi) \, d\sigma.
\end{align}
To study the definiteness of the second variation, we express it as an inner product,
\begin{equation}
    \delta^2 H_C[\delta \psi] = \frac{1}{2} \langle \delta \psi, \mathcal{H} \delta \psi \rangle_{L^2}.
\end{equation}
Isolating $\delta \psi$ in \eqref{eq:second_variation}, we define the Hessian as the following fourth-order, self-adjoint elliptic operator
\begin{equation} \label{eq:hessian_operator}
    \mathcal{H} = -\Delta + \Delta \left( \frac{1}{\partial_\psi F(\lambda, \psi_0)} \Delta \right).
\end{equation}
When evaluated along the bifurcating branch, where $\lambda = \lambda(\varepsilon)$ and $\psi_0 = \psi(\varepsilon)$, we denote this operator as $\mathcal{H}(\varepsilon)$.

\begin{defn}[Tetrahedral Formal Stability] \label{def:tetra_stability}
Let $X_{\mathbf{T}} \subset L^2(\mathbb{S}^2)$ denote the invariant subspace of tetrahedral symmetric functions. A bifurcating steady state $\psi(\varepsilon) \in X_{\mathbf{T}}$ is said to be \textit{formally stable} in the tetrahedral sense if the Hessian operator $\mathcal{H}(\varepsilon)$ of the Energy-Casimir functional is negative-definite over the perturbation space $X_{\mathbf{T}}$. 
\end{defn}

\begin{rmk}[Formal and Nonlinear Stability]
It is well established that the formal stability of infinite-dimensional Hamiltonian systems, defined by the definiteness of the second variation $\delta^2 H_C$, does not imply nonlinear (Lyapunov) stability due to the lack of general coercivity bounds. The classical criteria for nonlinear stability are provided by Arnold's theorems \cite{arnold1969apriori,arnold1998}.
The Second Arnold Theorem requires $0 > \partial_\psi F > -\lambda_1$, where $\lambda_1 = 2$ is the first non-zero eigenvalue of the spherical Laplacian. Since $\partial_\psi F = -12$ near the bifurcation, this general condition fails.

However, by restricting our functional space to the tetrahedral invariant subspace $X_{\mathbf{T}}$, we filter out the low-frequency modes $l \in \{1, 2\}$, shifting the spectral lower bound of the Laplacian to $\lambda_{\text{min}} = 12$. Consequently, the Second Arnold criterion on this restricted subspace requires $0 > \partial_\psi F > -12$. Because our critical bifurcating state originates at this singularity limit ($\partial_\psi F(\lambda^*, 0) = -12$), proving nonlinear stability requires higher-order coercivity bounds \cite{holm1985nonlinear}. 
\end{rmk}

Let us investigate the admissible perturbations $\delta\psi \in X_{\mathbf{T}}$ to evaluate the definiteness of the second variation. Since the velocity field is defined by the gradient of the stream function, a constant perturbation (the $l=0$ harmonic) produces zero velocity and does not carry kinetic energy. We eliminate this trivial mode by restricting our perturbations to zero-mean functions. Furthermore, high-frequency flows are unstable to large-scale perturbations, specifically the $l=1$ and $l=2$ spherical harmonics (Fj{\o}rtoft's instability). However, by reducing our problem to the tetrahedral subspace $X_{\mathbf{T}}$, we forbid these large-scale modes. 

Therefore, by combining the zero-mean condition with tetrahedral symmetry, all low-frequency perturbations $l \in \{0, 1, 2\}$ are projected out. Within this restricted space, the spectrum of the linearized operator $L$ begins at the first available tetrahedral harmonic, which is $l=3$.

The formal stability of the perturbed branch relies on this $l=3$ mode. To see this, we evaluate the unperturbed spectrum of $\mathcal{H}(0)$ at the bifurcation point. The eigenvalues of the Hessian are given by 
\begin{equation}
    \eta_l^{(0)} = -\frac{l(l+1)}{12}[l(l+1) - 12].
\end{equation}
For all high-frequency modes $l \ge 4$, the Hessian eigenvalues are strictly negative and bounded away from zero (e.g., $\eta_4^{(0)} = -40/3 < 0$). By standard spectral perturbation theory, a sufficiently small perturbation $\mathcal{O}(\varepsilon)$ cannot shift these negative eigenvalues above zero. Consequently, $l=3$ (for which $\eta_3^{(0)} = 0$) is the unique critical mode determining the definiteness of the perturbed Hessian.

\subsection{Statement of the Main Theorem}

To track the stability of this critical mode, let the linearized operator along the trivial branch be defined as 
\begin{equation}
    L_0(\lambda) = -\Delta + \partial_\psi F(\lambda, 0)I.
\end{equation}
Evaluating the action of $L_0(\lambda)$ on the $l=3$ tetrahedral harmonic $Y^*$ yields
\begin{equation}
    L_0(\lambda)Y^* = \left( 12 + \partial_\psi F(\lambda, 0) \right) Y^* = \gamma(\lambda) Y^*,
\end{equation}
where $\gamma(\lambda) = 12 + \partial_\psi F(\lambda, 0)$ is the eigenvalue of the trivial branch in the invariant subspace. At the critical parameter $\lambda = \lambda^*$, the necessary condition for bifurcation requires the kernel to be non-trivial, i.e. setting $\gamma(\lambda^*) = 0$ and $\partial_\psi F(\lambda^*, 0) = -12$.

Because the symmetry restriction $X_{\mathbf{T}}$ isolates $l=3$ as the critical mode, the Crandall-Rabinowitz framework \cite{crandall1971bifurcation,crandall1973stability} guarantees that, under specific transversality conditions (see Theorem \ref{thm:stability} below), this eigenvalue shifts continuously along the bifurcating branch. If it shifts into the negative half-plane, the formation of a saddle point is precluded, ensuring the formal stability of the new non-zonal flow. We formalize this mechanism in the following main theorem.

\begin{thm}[\textbf{Tetrahedral Formal Stability}]\label{thm:stability}
Let $F(\lambda, \psi)$ be a smooth nonlinearity satisfying $F(\lambda, 0) = 0$ and $\partial_\psi F(\lambda^*, 0) = -12$. Assume the nonlinearity possesses a pitchfork bifurcation ($\lambda_1 = 0$), parameterized by $\lambda(\varepsilon) = \lambda^* + \varepsilon^2 \lambda_2 + \mathcal{O}(\varepsilon^3)$.

Let $\psi(\varepsilon) = \varepsilon Y^* + \varepsilon^2 \psi_2 + \mathcal{O}(\varepsilon^3)$ be a finite-amplitude tetrahedral flow bifurcating from the trivial state at $\lambda = \lambda^*$ in the invariant subspace $X_{\mathbf{T}}$. Let $\gamma(\lambda)$ denote the simple eigenvalue of the linearized operator along the trivial branch.

The bifurcating flow $\psi(\varepsilon)$ is tetrahedral formally stable for sufficiently small $\varepsilon \neq 0$ if and only if the transversality condition $\gamma'(\lambda^*)$ and the bifurcation parameter $\lambda_2$ possess opposite signs,
\begin{equation}
    \lambda_2 \cdot \gamma'(\lambda^*) < 0.
\end{equation}
Conversely, if $\lambda_2 \cdot \gamma'(\lambda^*) > 0$, the flow generates a saddle point and is formally unstable.
\end{thm}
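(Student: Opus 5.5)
The plan has three stages. First, reduce the definiteness of $\mathcal{H}(\varepsilon)$ on $X_{\mathbf{T}}$ to the sign of one scalar eigenvalue. Second, compute that eigenvalue with the Crandall--Rabinowitz exchange-of-stability formula. Third, translate its sign back to $\mathcal{H}(\varepsilon)$.

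\emph{Reduction.} From \eqref{eq:second_variation_L}, $\mathcal{H}(\varepsilon)$ factors as
\[
\mathcal{H}(\varepsilon) = (-\Delta)\, \frac{1}{\partial_\psi F(\lambda(\varepsilon),\psi(\varepsilon))}\, L(\varepsilon), \qquad L(\varepsilon)=-\Delta+\partial_\psi F(\lambda(\varepsilon),\psi(\varepsilon)).
\]
Along the branch $\partial_\psi F = -12+\mathcal{O}(\varepsilon)$. So for small $\varepsilon$ the multiplier $1/\partial_\psi F$ is strictly negative and close to $-1/12$. On zero-mean functions in $X_{\mathbf{T}}$, $-\Delta \ge 12$.

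By the spectral gap computed before the statement ($\eta_l^{(0)}\le -40/3$ for $l\ge4$), the modes $l\ge 4$ stay uniformly negative under $\mathcal{O}(\varepsilon)$ perturbations. Hence $\mathcal{H}(\varepsilon)$ is negative definite on $X_{\mathbf{T}}$ if and only if its unique perturbed critical eigenvalue, continuing $\eta_3^{(0)}=0$, is negative. I will make this rigorous by Kato perturbation of an isolated simple eigenvalue, using that $\ker \mathcal{H}(0)\cap X_{\mathbf{T}}=\mathrm{span}\{Y^*\}$.

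Next I relate the sign of this eigenvalue to the sign of the critical eigenvalue $\mu(\varepsilon)$ of $L(\varepsilon)$, with $L(\varepsilon)w(\varepsilon)=\mu(\varepsilon)w(\varepsilon)$ and $w(\varepsilon)=Y^*+\mathcal{O}(\varepsilon)$. Testing the quadratic form on $w(\varepsilon)$ gives
\[
\langle w,\mathcal{H}w\rangle = \mu(\varepsilon)\left\langle (-\Delta w), \frac{w}{\partial_\psi F}\right\rangle = \mu(\varepsilon)\left(-1+\mathcal{O}(\varepsilon)\right),
\]
since $-\Delta w = 12 w + \mathcal{O}(\varepsilon)$ and $\|Y^*\|=1$. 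The critical eigenvalue of $\mathcal{H}$ therefore has the sign of $-\mu(\varepsilon)$ to leading order. I will control the min--max error terms by splitting perturbations into the $w$-direction and its complement, where the form is uniformly negative, and bounding the cross terms by $\mathcal{O}(\varepsilon)\,|\mu|$ plus higher order. This bookkeeping is the step I expect to be the main obstacle: the cross terms must be shown to be dominated by the diagonal contribution $\mu(\varepsilon)$, which itself is only $\mathcal{O}(\varepsilon^2)$.

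\emph{Exchange of stability.} I will invoke the Crandall--Rabinowitz stability theorem \cite{crandall1973stability}. Under the simple-kernel and transversality hypotheses $\gamma(\lambda^*)=0$, $\gamma'(\lambda^*)\neq0$, it gives
\[
\mu(\varepsilon) \sim -\varepsilon\,\lambda'(\varepsilon)\,\gamma'(\lambda(\varepsilon)) \quad \text{as } \varepsilon\to0,
\]
with matching signs when $\mu\neq0$. Substituting $\lambda'(\varepsilon)=2\varepsilon\lambda_2+\mathcal{O}(\varepsilon^2)$ yields
\[
\mu(\varepsilon) = -2\varepsilon^2\lambda_2\gamma'(\lambda^*)+o(\varepsilon^2).
\]
This requires $\lambda_2\neq0$ for a nondegenerate pitchfork.

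\emph{Conclusion.} The critical Hessian eigenvalue has the sign of $-\mu(\varepsilon)$, which is the sign of $2\varepsilon^2\lambda_2\gamma'(\lambda^*)$. It is therefore negative for small $\varepsilon\neq0$ exactly when $\lambda_2\gamma'(\lambda^*)<0$, giving formal stability. When $\lambda_2\gamma'(\lambda^*)>0$, one eigenvalue is positive while the $l\ge4$ modes stay negative, so the second variation is indefinite and $\psi(\varepsilon)$ is a saddle point. Both directions of the equivalence follow, since under the pitchfork assumption the sign of $\lambda_2\gamma'(\lambda^*)$ is strictly determined.
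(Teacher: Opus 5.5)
Your proposal is correct. Its outer stages match the paper: you isolate the $l=3$ mode via the spectral gap, and you read off $\mu(\varepsilon)=-2\varepsilon^2\lambda_2\gamma'(\lambda^*)+o(\varepsilon^2)$ from Crandall--Rabinowitz. The link between $\mathcal{H}(\varepsilon)$ and $L(\varepsilon)$, however, is obtained by a genuinely different route.

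\emph{The paper's route.} It carries out second-order Rayleigh--Schr\"odinger expansions for both operators. Parity gives $\mu_1=\eta_1=0$. The pseudo-inverse is factored as $S_{\mathcal{H}}=12S_L\Delta^{-1}$, and the $l=0$ part of $V_1Y^*$ is discarded using $\int\Delta(V_1Y^*)=0$. Finally $\Delta S_L=-I-12S_L$ cancels the $V_1^2$ terms, giving $\eta_2=-\mu_2$.

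\emph{Your route.} You use the exact factorization $\mathcal{H}(\varepsilon)=(-\Delta)M_\varepsilon L(\varepsilon)$, with $M_\varepsilon$ multiplication by $1/\partial_\psi F$ (already implicit in \eqref{eq:second_variation_L}), and test the form on the $L$-eigenfunction. This needs no second-order computation and no parity cancellation. It gives the sign correspondence between the critical eigenvalue of $\mathcal{H}(\varepsilon)$ and $-\mu(\varepsilon)$ at each small $\varepsilon$, not only for the $\varepsilon^2$ coefficients; your splitting estimate in fact returns $\eta_2=-\mu_2$ as a byproduct. In exchange, the paper's route yields explicit formulas for $\mu_2$ and $\eta_2$ in terms of $V_1$ and $V_2$.

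\emph{The obstacle you flag.} It is harmless, because $\mathcal{H}(\varepsilon)w=\mu(\varepsilon)(-\Delta)(M_\varepsilon w)$ holds exactly. For zero-mean $z\perp w$, the cross term is $2a\mu\langle-\Delta(M_\varepsilon w),z\rangle$. Since $-\Delta(M_\varepsilon w)=-w+\mathcal{O}(\varepsilon)$, this is at most $C\varepsilon\mu|a|\,\|z\|\le\tfrac12\mu a^2+\mathcal{O}(\varepsilon^2\mu)\|z\|^2$ when $\mu>0$. Both pieces are absorbed by the diagonal terms, since $\mathcal{O}(\varepsilon^2\mu)=\mathcal{O}(\varepsilon^4)$ while the complement is uniformly negative. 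When $\mu<0$, the single test vector already exhibits a positive direction, so no splitting is needed.

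\emph{Two details to handle.}
\begin{itemize}
\item $w(\varepsilon)$ generally has nonzero mean: $L(\varepsilon)$ does not preserve zero mean, and $L_0$ acts as $-12$ on the constants in $X_{\mathbf{T}}$. Since $\mathcal{H}$ annihilates constants, you may replace $w$ by its zero-mean part without changing $\langle w,\mathcal{H}w\rangle$ or the cross terms.
\item $\mathcal{H}(\varepsilon)-\mathcal{H}(0)=\Delta\big((M_\varepsilon-M_0)\Delta\big)$ is fourth order and not $L^2$-small. The uniform negativity on the complement should therefore be measured against $\|\Delta z\|^2$ rather than via the $L^2$ gap $-40/3$; for instance, $\langle z,\mathcal{H}(0)z\rangle\le-\tfrac{1}{30}\|\Delta z\|^2$ on the modes $l\ge4$.
\end{itemize}
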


The proof of this theorem, based on an expansion of the Energy-Casimir Hessian, is detailed in Section \ref{sec:proof}. Subsequently, in Section \ref{sec:physical_models}, we apply this general criterion to four specific geophysical profile functions, using the bifurcation topologies derived in \cite{yuri2026}.

\section{Proof of the Main Result}\label{sec:proof}

In this section, we prove the main result stated in Theorem \ref{thm:stability}. 
Our strategy relies on a perturbative expansion of the fourth-order Hessian $\mathcal{H}(\varepsilon)$. The proof follows three main steps:
\begin{itemize}
    \item First, we analyze the unperturbed spectrum to show that the definiteness relies only on the $l=3$ critical eigenvalue $\eta(\varepsilon)$.
    \item Second, we perform a standard perturbation expansion of both the linearized operator $L(\varepsilon)$ and the geometric Hessian $\mathcal{H}(\varepsilon)$ to extract their leading-order eigenvalue corrections, $\mu_2$ and $\eta_2$ respectively.
    \item Third, by exploiting the Fredholm alternative and some cancellations on the unit sphere, we prove the identity $\eta_2 = -\mu_2$, which allows us to close the stability problem using the Crandall-Rabinowitz bifurcation framework.
\end{itemize}

\begin{proof}

We investigate the formal stability of the tetrahedral flow by evaluating the spectrum of the Energy-Casimir second variation $\delta^2 H_C$ over $X_{\mathbf{T}}$.
To evaluate this, we recall definition \eqref{eq:hessian_operator}
\begin{equation}
    \mathcal{H}(\varepsilon) = -\Delta + \Delta \left( [\partial_\psi F]^{-1} \Delta \right).
\end{equation}
Since we have $\partial_\psi F(\lambda^*, 0) = -12 < 0$, the pre-factor $[\partial_\psi F(\lambda(\varepsilon), \psi(\varepsilon))]^{-1}$ remains smooth, bounded, and strictly negative for all sufficiently small $\varepsilon$. This regularity ensures that the functional $\mathcal{H}(\varepsilon)$ is well-defined in the neighborhood of the bifurcation.

Let us evaluate the unperturbed Hessian $\mathcal{H}(0)$ along the trivial branch $(\lambda^*, 0)$. Since $\partial_\psi F(\lambda^*, 0) = -12$, we obtain
\begin{equation}
    \mathcal{H}(0) = -\Delta - \frac{1}{12}\Delta^2 = \frac{1}{12}\Delta \big( -12 - \Delta \big) = \frac{1}{12}\Delta L_0.
\end{equation}
We now analyze the spectrum of $\mathcal{H}(0)$ restricted to the invariant subspace $X_{\mathbf{T}}$:
\begin{enumerate}
    \item For $l=3$, the linearized operator $L_0$ vanishes, $L_0 Y^* = 0$. Hence, $\mathcal{H}(0)$ has a zero eigenvalue with a one-dimensional kernel spanned by $Y^*$.
    \item For all higher-order harmonics $l \ge 4$, the eigenvalues of $L_0$ are strictly positive (e.g., $L_0 Y_4 = (20-12)Y_4 = 8Y_4$). Since the Laplacian $\Delta$ is a strictly negative operator (its spectrum on $X_{\mathbf{T}}$ begins at $-12$ and decreases), the product $\frac{1}{12}\Delta L_0$ forces the spectrum of $\mathcal{H}(0)$ to be strictly negative and bounded away from zero for all $l \ge 4$.
\end{enumerate}

When we perturb the system $\varepsilon \neq 0$, as already mentioned the pre-factor $[\partial_\psi F]^{-1}$ remains smooth and uniformly bounded away from zero. By standard perturbation theory for regular elliptic operators, the spectral branches shift continuously. Because the unperturbed spectrum for $l \ge 4$ is separated from zero by a finite gap, it remains strictly negative for sufficiently small $\varepsilon$.

Consequently, the definiteness of $\mathcal{H}(\varepsilon)$ on $X_{\mathbf{T}}$ is determined only by the sign of the perturbed eigenvalue $\eta(\varepsilon)$ emerging at $l=3$. If $\eta(\varepsilon) < 0$, the Hessian is negative-definite.

Let the potential be expanded along the bifurcating branch as
\begin{equation}
    \partial_\psi F(\lambda(\varepsilon), \psi(\varepsilon)) = -12 + \varepsilon V_1 + \varepsilon^2 V_2 + \mathcal{O}(\varepsilon^3).
\end{equation}
The first-order coefficient $V_1$ is obtained by taking the derivative with respect to $\varepsilon$ and evaluating it at the bifurcation point $\varepsilon = 0$,
\begin{equation}
    V_1 = \left. \frac{d}{d\varepsilon} \Big[ \partial_\psi F(\lambda(\varepsilon), \psi(\varepsilon)) \Big] \right|_{\varepsilon=0} = \partial_{\lambda \psi} F(\lambda^*, 0) \lambda'(0) + \partial_{\psi \psi} F(\lambda^*, 0) \psi'(0).
\end{equation}
Recall that the bifurcating branch emerges from the trivial state, meaning $\psi(0) = 0$, and that $\psi'(0) = Y^*$. Furthermore, due to the symmetry of the bifurcation $\lambda'(0) = \lambda_1 = 0$. Substituting these values into the above relation, yields
\begin{equation}\label{eq:V_1}
    V_1 = \partial_{\psi\psi}F(\lambda^*, 0) Y^*.
\end{equation}
To determine the first-order eigenvalue correction, we consider the perturbed eigenvalue problem for the linearized operator acting on the eigenfunction $w(\varepsilon)$,
\begin{equation} \label{eq:app_eigen_prob}
    L(\varepsilon) w(\varepsilon) = \mu(\varepsilon) w(\varepsilon).
\end{equation}
The operators and variables are expanded in a power series with respect to the parameter $\varepsilon$ as follows,
\begin{align}
    L(\varepsilon) &= L_0 + \varepsilon V_1 + \varepsilon^2 V_2 + \mathcal{O}(\varepsilon^3), \\
    w(\varepsilon) &= Y^* + \varepsilon w_1 + \varepsilon^2 w_2 + \mathcal{O}(\varepsilon^3), \\
    \mu(\varepsilon) &= \varepsilon \mu_1 + \varepsilon^2 \mu_2 + \mathcal{O}(\varepsilon^3),
\end{align}
where we used the bifurcation condition $\mu(0) = 0$ and chose the base eigenfunction $w(0) = Y^* \in \ker(L_0)$, such that $\|Y^*\|^2 = 1$. 

Substituting these expansions into \eqref{eq:app_eigen_prob} and collecting terms by powers of $\varepsilon$, we obtain the following equations.

At order $\mathcal{O}(1)$, we recover 
\begin{equation}
    L_0 Y^* = 0.
\end{equation}

At order $\mathcal{O}(\varepsilon)$, we obtain 
\begin{equation}\label{eq:first_order}
    L_0 w_1 + V_1 Y^* = \mu_1 Y^*.
\end{equation} 
To isolate $\mu_1$, we use the Fredholm alternative by projecting the equation onto the kernel generated by $Y^*$. Taking the $L^2$-inner product with $Y^*$, we get
\begin{equation}
    \langle L_0 w_1, Y^* \rangle + \langle V_1 Y^*, Y^* \rangle = \mu_1 \|Y^*\|^2.
\end{equation}
By the self-adjointness of $L_0$ and recalling that $L_0 Y^* = 0$, the first term vanishes 
\begin{equation}
    \langle L_0 w_1, Y^* \rangle = \langle w_1, L_0 Y^* \rangle = 0.
\end{equation}
Consequently, the first-order correction $\mu_1$ is given by 
\begin{equation}
    \mu_1 = \langle V_1 Y^*, Y^* \rangle \sim \iint_{\mathbb{S}^2} V_1 (Y^*)^2 \, d\sigma.
\end{equation}
From \eqref{eq:V_1} we know that $V_1 \sim Y^*$, then the integrand is proportional to $(Y^*)^3$, which is odd. Integrating over $\mathbb{S}^2$ yields  zero. Thus, we obtain
\begin{equation}\label{eq:mu_1}
    \mu_1 = 0.
\end{equation}
Therefore, from \eqref{eq:first_order} we obtain
\begin{equation} \label{eq:app_order1}
     L_0 w_1 = -V_1 Y^*.
\end{equation}
Because the term $V_1 Y^*$ is orthogonal to the kernel $\langle V_1 Y^*, Y^* \rangle = 0$, equation \eqref{eq:app_order1} is solvable. We determine the first-order eigenfunction correction by applying the pseudo-inverse $S_L = (L_0|_{\ker(L_0)^\perp})^{-1}$ and imposing the orthogonality condition $\langle w_1, Y^* \rangle = 0$:
\begin{equation} \label{eq:app_w1}
    w_1 = -S_L V_1 Y^*.
\end{equation}
At order $\mathcal{O}(\varepsilon^2)$, we have
\begin{equation} \label{eq:app_order2}
    L_0 w_2 + V_1 w_1 + V_2 Y^* = \mu_2 Y^* + \mu_1 w_1.
\end{equation}
Setting $\mu_1 = 0$ and projecting equation \eqref{eq:app_order2} onto the kernel spanned by $Y^*$, we obtain
\begin{equation}
    \langle L_0 w_2, Y^* \rangle + \langle V_1 w_1, Y^* \rangle + \langle V_2 Y^*, Y^* \rangle = \mu_2 \langle Y^*, Y^* \rangle.
\end{equation}
By the self-adjointness of $L_0$, the first term vanishes $\langle L_0 w_2, Y^* \rangle = 0$. Since $\|Y^*\|^2 = 1$, we derive
\begin{equation}
    \mu_2 = \langle V_2 Y^*, Y^* \rangle + \langle V_1 w_1, Y^* \rangle.
\end{equation}
Finally, we substitute \eqref{eq:app_w1}. Exploiting the fact that the potential $V_1$ acts as a real, self-adjoint multiplicative operator, we can move it across the inner product,
\begin{align}\label{eq:mu2_app}
    \mu_2 &= \langle V_2 Y^*, Y^* \rangle + \langle V_1 (-S_L V_1 Y^*), Y^* \rangle \nonumber \\
    &= \langle V_2 Y^*, Y^* \rangle - \langle S_L V_1 Y^*, V_1 Y^* \rangle \nonumber \\
    &= \langle V_2 Y^*, Y^* \rangle - \langle V_1 Y^*, S_L V_1 Y^* \rangle.
\end{align}
Let us now proceed similarly for $\mathcal{H}(\varepsilon)$.
We expand the pre-factor of the Hessian using the Taylor series $(1-x)^{-1} = 1 + x + x^2 + \dots$
\begin{align}
    [\partial_\psi F]^{-1} &= \frac{1}{-12 \left( 1 - \frac{\varepsilon V_1 + \varepsilon^2 V_2}{12} \right)} \nonumber \\
    &= -\frac{1}{12} \left[ 1 + \left( \frac{\varepsilon V_1 + \varepsilon^2 V_2}{12} \right) + \left( \frac{\varepsilon V_1}{12} \right)^2 \right] + \mathcal{O}(\varepsilon^3) \nonumber \\
    &= -\frac{1}{12} - \frac{\varepsilon V_1}{144} - \varepsilon^2 \left( \frac{V_2}{144} + \frac{V_1^2}{1728} \right) + \mathcal{O}(\varepsilon^3).
\end{align}
Isolating the operator orders $\mathcal{H}(\varepsilon) = \mathcal{H}_0 + \varepsilon \mathcal{H}_1 + \varepsilon^2 \mathcal{H}_2$, we identify
\begin{align}
    \mathcal{H}_0 &= -\Delta - \frac{1}{12}\Delta^2, \\
    \mathcal{H}_1 &= -\frac{1}{144}\Delta (V_1 \Delta), \label{eq:def_H_1}\\
    \mathcal{H}_2 &= -\frac{1}{144}\Delta \left( \left(V_2 + \frac{1}{12}V_1^2 \right) \Delta \right).\label{eq:def_H_2}
\end{align}
We define the following eigenvalue problem
\begin{equation}\label{eq:eigenvalue_problem_hess}
    \mathcal{H}(\varepsilon) Y(\varepsilon) = \eta(\varepsilon) Y(\varepsilon).
\end{equation}
We expand the operator, the eigenvalue, and the eigenvector in powers of $\varepsilon$,
\begin{align}
    \mathcal{H}(\varepsilon) &= \mathcal{H}_0 + \varepsilon \mathcal{H}_1 + \varepsilon^2 \mathcal{H}_2 + \mathcal{O}(\varepsilon^3), \\
    \eta(\varepsilon) &= \varepsilon \eta_1 + \varepsilon^2 \eta_2 + \mathcal{O}(\varepsilon^3), \\
    Y(\varepsilon) &= Y^* + \varepsilon Y_1 + \varepsilon^2 Y_2 + \mathcal{O}(\varepsilon^3),
\end{align}
where we used the fact that the unperturbed eigenvalue is $\eta_0 = 0$. Substituting these expansions into \eqref{eq:eigenvalue_problem_hess} and collecting terms order by order yields the following equations.

At order $\mathcal{O}(1)$:
\begin{equation}
    \mathcal{H}_0 Y^* = 0,
\end{equation}
which recovers the kernel of the unperturbed operator.

At order $\mathcal{O}(\varepsilon)$:
\begin{equation}\label{eq:order_1_H}
    \mathcal{H}_0 Y_1 + \mathcal{H}_1 Y^* = \eta_1 Y^*.
\end{equation}
Taking the inner product with $Y^*$ and exploiting the self-adjointness of $\mathcal{H}_0$, the first term vanishes $\langle \mathcal{H}_0 Y_1, Y^* \rangle = 0$. Since $\|Y^*\|^2 = 1$, we obtain
\begin{equation}
    \eta_1 = \langle \mathcal{H}_1 Y^*, Y^* \rangle.
\end{equation}
Recalling definition \eqref{eq:def_H_1}, we evaluate this product by exploiting the self-adjointness of the Laplacian and the relation $\Delta Y^* = -12Y^*$,
\begin{align}
    \eta_1 &= -\frac{1}{144} \langle \Delta(V_1 \Delta Y^*), Y^* \rangle \nonumber \\
           &= -\frac{1}{144} \langle V_1 \Delta Y^*, \Delta Y^* \rangle \nonumber \\
           &= -\frac{1}{144} \langle V_1 (-12 Y^*), -12 Y^* \rangle \nonumber \\
           &= -\langle V_1 Y^*, Y^* \rangle.
\end{align}
As established in \eqref{eq:mu_1}, the odd symmetry of $Y^*$ causes this integral to vanish. Thus, we obtain
\begin{equation}
    \eta_1 = 0.
\end{equation}
Therefore, equation \eqref{eq:order_1_H} simplifies to 
\begin{equation}
    \mathcal{H}_0 Y_1 = -\mathcal{H}_1 Y^*.
\end{equation}
Inverting $\mathcal{H}_0$ on the orthogonal complement of its kernel via the pseudo-inverse $S_{\mathcal{H}}= (\mathcal{H}_0|_{\ker(\mathcal{H}_0)^\perp})^{-1}$, we find the first-order correction to the eigenvector,
\begin{equation}\label{eq:Y_1}
    Y_1 = -S_{\mathcal{H}} \mathcal{H}_1 Y^*.
\end{equation}

At order $\mathcal{O}(\varepsilon^2)$:
\begin{equation}
    \mathcal{H}_0 Y_2 + \mathcal{H}_1 Y_1 + \mathcal{H}_2 Y^* = \eta_2 Y^* + \eta_1 Y_1.
\end{equation}
Since $\eta_1 = 0$, projecting this equation onto $Y^*$ and again using $\langle \mathcal{H}_0 Y_2, Y^* \rangle = 0$ yields
\begin{equation}
    \eta_2 = \langle \mathcal{H}_2 Y^*, Y^* \rangle + \langle \mathcal{H}_1 Y_1, Y^* \rangle.
\end{equation}
Using the self-adjointness of $\mathcal{H}_1$, we rewrite the second term as $\langle Y_1, \mathcal{H}_1 Y^* \rangle$. Substituting \eqref{eq:Y_1}, we have
\begin{equation}
    \eta_2 = \langle \mathcal{H}_2 Y^*, Y^* \rangle - \langle S_{\mathcal{H}} \mathcal{H}_1 Y^*, \mathcal{H}_1 Y^* \rangle.
\end{equation}
For the first term, using \eqref{eq:def_H_2} and observing that $\Delta Y^* = -12Y^*$, we obtain
\begin{equation} \label{eq:eta2_term1}
    \langle \mathcal{H}_2 Y^*, Y^* \rangle = -\frac{1}{144} \left\langle \left(V_2 + \frac{1}{12}V_1^2\right)(-12Y^*), -12Y^* \right\rangle = -\langle V_2 Y^*, Y^* \rangle - \frac{1}{12}\langle V_1^2 Y^*, Y^* \rangle.
\end{equation}
For the second term, we first compute the action of $\mathcal{H}_1$ on $Y^*$,
\begin{equation}
    \mathcal{H}_1 Y^* = -\frac{1}{144}\Delta(V_1(-12Y^*)) = \frac{1}{12}\Delta(V_1 Y^*).
\end{equation}
To isolate the action of the unperturbed Hessian, we factorize its operator as 
\begin{equation}
    \mathcal{H}_0 = -\Delta - \frac{1}{12}\Delta^2 = \frac{1}{12}\Delta(-\Delta - 12) = \frac{1}{12}\Delta L_0.
\end{equation}
Since the differential operators commute, the pseudo-inverse factorizes as $S_{\mathcal{H}} = 12 S_L \Delta^{-1}$. This operator is well-defined on the range of the Laplacian (i.e., orthogonal to the mode $l=0$). Substituting into the inner product yields
\begin{equation}\label{eq:second_term}
    \langle \mathcal{H}_1 Y^*, S_{\mathcal{H}} \mathcal{H}_1 Y^* \rangle = \left\langle \frac{1}{12}\Delta(V_1 Y^*), 12 S_L \Delta^{-1} \left(\frac{1}{12}\Delta(V_1 Y^*)\right) \right\rangle.
\end{equation}
Notice that the pseudo-inverse recovers the function up to its zero-mean projection,
\begin{equation}
    \Delta^{-1}\Delta(V_1 Y^*) = V_1 Y^* - c_0 Y_0^0,
\end{equation}
where $c_0 Y_0^0$ represents the non-zero mean (the $l=0$ component) of $V_1 Y^* \sim (Y^*)^2$. Substituting this decomposition into the inner product, we obtain
\begin{equation}
    \frac{1}{12} \langle \Delta(V_1 Y^*), S_L (V_1 Y^* - c_0 Y_0^0) \rangle = \frac{1}{12} \langle \Delta(V_1 Y^*), S_L (V_1 Y^*) \rangle - \frac{1}{12} \langle \Delta(V_1 Y^*), S_L (c_0 Y_0^0) \rangle.
\end{equation}
The second term on the right-hand side vanishes. Indeed, by the Divergence Theorem, the integral of a Laplacian over $\partial \mathbb{S}^2 = \emptyset$ is zero,
\begin{equation}
    \iint_{\mathbb{S}^2} \Delta (V_1 Y^*) \, d\sigma = \iint_{\mathbb{S}^2} \nabla \cdot \nabla (V_1 Y^*) \, d\sigma = \oint_{\partial \mathbb{S}^2} \nabla (V_1 Y^*) \cdot \mathbf{n} \, dl = 0.
\end{equation}
This establishes that $\Delta(V_1 Y^*)$ is orthogonal to any constant mode. Since $S_L$ preserves this orthogonality, the term $\langle \Delta(V_1 Y^*), S_L (c_0 Y_0^0) \rangle$ is an inner product between a zero-mean function and a constant, which evaluates to zero. Thus, equation \eqref{eq:second_term} becomes
\begin{equation}
    \langle \mathcal{H}_1 Y^*, S_{\mathcal{H}} \mathcal{H}_1 Y^* \rangle = \frac{1}{12} \langle \Delta(V_1 Y^*), S_L (V_1 Y^*) \rangle.
\end{equation}
Replacing the remaining Laplacian with $\Delta = -L_0 - 12$, we recall that $L_0 S_L = I$ on the orthogonal complement. Since $V_1 Y^* \sim (Y^*)^2$ contains only even-degree harmonics $l \in \{0, 2, 4, 6\}$, it is orthogonal to $\ker(L_0) = \text{span}\{Y^*\}$ ($l=3$). Thus, $\Delta S_L = -I - 12 S_L$. Therefore, we derive
\begin{align} \label{eq:eta2_term2}
    \langle \mathcal{H}_1 Y^*, S_{\mathcal{H}} \mathcal{H}_1 Y^* \rangle &= \frac{1}{12} \langle V_1 Y^*, \left(-I - 12 S_L\right) V_1 Y^* \rangle \nonumber \\
    &= -\frac{1}{12}\langle V_1^2 Y^*, Y^* \rangle - \langle V_1 Y^*, S_L V_1 Y^* \rangle.
\end{align}
Finally, by subtracting \eqref{eq:eta2_term2} from \eqref{eq:eta2_term1}, the quadratic cross-terms $-\frac{1}{12}\langle V_1^2 Y^*, Y^* \rangle$ cancel, resulting in
\begin{equation}
    \eta_2 = -\langle V_2 Y^*, Y^* \rangle + \langle V_1 Y^*, S_L V_1 Y^* \rangle.
\end{equation}
Comparing this result with \eqref{eq:mu2_app}, we establish the identity
\begin{equation}
    \eta_2 = -\mu_2.
\end{equation}
To determine the formal stability of the bifurcating flow, we evaluate the sign of the perturbed Hessian eigenvalue $\eta(\varepsilon)$ for $|\varepsilon| \ll 1$. Expanding the eigenvalue yields $\eta(\varepsilon) = \eta_0 + \varepsilon \eta_1 + \varepsilon^2 \eta_2 + \mathcal{O}(\varepsilon^3)$. As established above, $\eta_0 = 0$, and $\eta_1 = 0$. Therefore, the expansion collapses to
\begin{equation}
    \eta(\varepsilon) = \varepsilon^2 \eta_2 + \mathcal{O}(\varepsilon^3).
\end{equation}
The sign of the eigenvalue is given by the second-order coefficient, $\text{sign}(\eta(\varepsilon)) = \text{sign}(\eta_2)$. Thus, formal stability $\eta(\varepsilon) < 0$ is achieved if and only if $\eta_2 < 0$.

Having proved that the Hessian and the linearized operator are related by $\eta_2 = -\mu_2$ at the leading order, the stability condition is equivalent to $\mu_2 > 0$. From the Crandall-Rabinowitz bifurcation theorem \cite{crandall1973stability}, the sign of this perturbed eigenvalue is given by 
\begin{equation}
    \text{sign}(\mu_2) = -\text{sign}(\lambda_2 \gamma'(\lambda^*)).
\end{equation}
 Therefore, formal stability is achieved if and only if the bifurcation amplitude $\lambda_2$ and the transversality condition $\gamma'(\lambda^*)$ have opposite signs.
\end{proof}

\section{Stability Analysis of Physical Models}\label{sec:physical_models}

With the general stability criterion established in Theorem \ref{thm:stability}, we evaluate the four profile functions analyzed in our previous bifurcation study \cite{yuri2026}. For each model, we investigate the sign of the product $\lambda_2 \cdot \gamma'(\lambda^*)$ to determine its stability. 

Before proceeding, we recall the core implication of Theorem \ref{thm:stability}: since $\partial_\psi F(\lambda^*, 0) = -12$, the condition $\mu_2 > 0$ ensures that the Energy-Casimir second variation $\delta^2 H_C$
is negative-definite. Consequently, the perturbed flow has a formal maximum of the functional on $X_{\mathbf{T}}$, thus achieving formal stability.

\subsection{Polynomial Model}
We first consider the generalized polynomial nonlinearity \cite{constantin2022,pedlosky1987geophysical},
\begin{equation}
    F(\lambda, \psi) = \mu_1(3\lambda\psi^2 + \psi^3) + (3\mu_1\lambda^2 - [\mu+12])\psi,
\end{equation}
with $\mu, \mu_1 > 0$. Since the potential along the trivial branch evaluates to $\partial_\psi F(\lambda, 0) = 3\mu_1\lambda^2 - \mu - 12$, the eigenvalue is $\gamma(\lambda) = 3\mu_1\lambda^2 - \mu$ \cite{yuri2026}. The bifurcation condition $\gamma(\lambda^*) = 0$ is met at $\lambda^* = \sqrt{\mu/3\mu_1}$, recovering the condition $\partial_\psi F(\lambda^*, 0) = -12$. The derivative of $\gamma$ at the bifurcation point is strictly positive
\begin{equation}
    \gamma'(\lambda^*) = 6\mu_1 \sqrt{\frac{\mu}{3\mu_1}} > 0.
\end{equation}
Furthermore, as established in \cite{yuri2026}, we know that the polynomial model exhibits a \textit{subcritical} bifurcation topology, meaning the amplitude coefficient is strictly negative $\lambda_2 < 0$. Because $\lambda_2$ and $\gamma'(\lambda^*)$ have opposite signs, their product is negative. Applying the Crandall-Rabinowitz criterion from Theorem \ref{thm:stability}, we obtain $\mu_2 > 0$. We therefore conclude that the subcritical tetrahedral flow in this polynomial model is \textbf{formally stable}.

\subsection{Sine-Gordon Model}
We now analyze the sine-Gordon model \cite{flierl1987isolated},
\begin{equation}
    F(\lambda, \psi) = -\lambda \sin(\psi).
\end{equation}
Since the potential along the trivial branch evaluates to $\partial_\psi F(\lambda, 0) = -\lambda \cos(0) = -\lambda$, the eigenvalue is $\gamma(\lambda) = 12 - \lambda$ \cite{yuri2026}. The bifurcation condition $\gamma(\lambda^*) = 0$ is met at the critical threshold $\lambda^* = 12$, recovering the condition $\partial_\psi F(\lambda^*, 0) = -12$ required by Theorem \ref{thm:stability}. 
The derivative of $\gamma$ at the bifurcation point is strictly negative 
\begin{equation}
    \gamma'(\lambda^*) = -1 < 0.
\end{equation}
Furthermore, as established in \cite{yuri2026}, the second-order spatial correction $\psi_2$ vanishes, and the branch bifurcates \textit{supercritically} $\lambda_2 > 0$. Since $\lambda_2 > 0$ and $\gamma'(\lambda^*) < 0$ have opposite signs, their product is negative. Applying the Crandall-Rabinowitz criterion from Theorem \ref{thm:stability}, this gives $\mu_2 > 0$. We therefore conclude that the supercritical tetrahedral flow in the sine-Gordon model is \textbf{formally stable}.

\subsection{Sinh-Gordon Model}
We consider the sinh-Gordon model \cite{stuart1967finite}, governed by 
\begin{equation}
    F(\lambda, \psi) = -\lambda \sinh(\psi).
\end{equation}
Since the potential along the trivial branch evaluates to $\partial_\psi F(\lambda, 0) = -\lambda \cosh(0) = -\lambda$, the eigenvalue is $\gamma(\lambda) = 12 - \lambda$ \cite{yuri2026}, as in the sine-Gordon case. The bifurcation condition $\gamma(\lambda^*) = 0$ is met at the critical threshold $\lambda^* = 12$, recovering the condition $\partial_\psi F(\lambda^*, 0) = -12$. The derivative of $\gamma$ at the bifurcation point is strictly negative
\begin{equation}
    \gamma'(\lambda^*) = -1 < 0.
\end{equation}
Furthermore, as established in \cite{yuri2026}, the sinh-Gordon model exhibits a \textit{subcritical} bifurcation $\lambda_2 < 0$. Since $\lambda_2 < 0$ and $\gamma'(\lambda^*) < 0$ have the same sign, their product is positive. Applying the Crandall-Rabinowitz criterion from Theorem \ref{thm:stability}, this results in $\mu_2 < 0$. We therefore conclude that the subcritical tetrahedral flow in the sinh-Gordon model generates a saddle point and is \textbf{formally unstable}.

\subsection{Exponential Model}

Finally, we examine the Liouville exponential model \cite{joyce1973negative} coupled with a mass constraint,
\begin{equation}
\begin{cases}
    -\Delta\psi + \lambda e^\psi = \frac{\lambda}{4\pi} \iint_{\mathbb{S}^2} e^\psi \, d\sigma,\\
    \iint_{\mathbb{S}^2} \psi \, d\sigma = 0.
\end{cases}
\end{equation}
The linearized operator around a finite-amplitude state $\psi(\varepsilon)$ takes the form
\begin{equation}
    L_{\psi(\varepsilon)}[\delta\psi] = -\Delta\delta\psi + \lambda e^{\psi(\varepsilon)} \delta\psi - \frac{\lambda}{4\pi}\iint_{\mathbb{S}^2} e^{\psi(\varepsilon)} \delta\psi \, d\sigma.
\end{equation}
However, the mass constraint restricts all permissible perturbations $\delta\psi$ and test functions $v$ to the zero-mean functional space. Consequently, when we evaluate $\langle L_{\psi(\varepsilon)}[\delta\psi], v \rangle$, the constant integral term vanishes. We can therefore consider the operator 
\begin{equation}
    \tilde{L} = -\Delta + \lambda e^{\psi(\varepsilon)} I.
\end{equation}
Since the potential along the trivial branch evaluates to $\partial_\psi F(\lambda, 0) = \lambda e^0 = \lambda$, the eigenvalue is $\gamma(\lambda) = 12 + \lambda$ \cite{yuri2026}. The bifurcation condition $\gamma(\lambda^*) = 0$ is met at the critical threshold $\lambda^* = -12$, recovering the condition $\partial_\psi F(\lambda^*, 0) = -12$ required by Theorem \ref{thm:stability}. The derivative of $\gamma$ at the bifurcation point is strictly positive
\begin{equation}
    \gamma'(\lambda^*) = 1 > 0.
\end{equation}
Furthermore, as established in \cite{yuri2026}, this exponential model exhibits a \textit{supercritical} bifurcation $\lambda_2 > 0$. Since $\lambda_2 > 0$ and $\gamma'(\lambda^*) > 0$ have the same sign, their product is positive. Applying the Crandall-Rabinowitz criterion from Theorem \ref{thm:stability}, this results in $\mu_2 < 0$. We therefore conclude that the supercritical tetrahedral flow in the exponential model generates a saddle point and is \textbf{formally unstable}.

\section{Discussion and Conclusions}\label{sec:conclusions}

In this work, we establish an analytical framework to classify the stability of tetrahedral non-zonal flows on the sphere. By proving that the linearized elliptic operator emerging from the Liapunov-Schmidt reduction behaves like the Hessian of the Arnold Energy-Casimir functional, we map the problem of formal stability to the sign of the critical perturbed eigenvalue. 

Our analysis reveals that the definiteness of the second variation is sensitive to the specific nonlinearity of the profile function. For the subcritical sinh-Gordon and supercritical exponential models, the Hessian has a positive eigenvalue, yielding an indefinite second variation. Since definiteness is a necessary condition for nonlinear Lyapunov stability, these flows generate topological saddle points in the Energy-Casimir framework. This property guarantees their linear instability against perturbations.

On the other hand, both the subcritical polynomial model and the supercritical sine-Gordon model achieve a negative-definite Hessian. This property precludes saddle-point formation, providing their formal stability. Therefore, among the considered nonlinearities, only the polynomial and sine-Gordon non-zonal flows are formally stable.

The complete classification of these physical models is summarized in Table \ref{tab:stability_summary}, which highlights the interplay between the bifurcation topology and the spectral crossing rate.

\begin{table}[htbp]
\centering
\renewcommand{\arraystretch}{1.6}
\resizebox{\textwidth}{!}{
\begin{tabular}{@{}l c c c c l@{}}
\toprule
\textbf{Physical Model} & \textbf{Bifurcation} ($\lambda_2$) & \textbf{Spectral Rate} ($\gamma'$) & \textbf{$\lambda_2$, $\gamma'$ Signs} & \textbf{CR Eigenvalue} ($\mu_2$) & \textbf{Formal Stability} \\
\midrule
\textbf{Polynomial} & Subcritical ($-$) & Positive ($+$) & opposite & Positive ($+$) & \textbf{Stable} (Maximum) \\
\textbf{Sine-Gordon} & Supercritical ($+$) & Negative ($-$) & opposite & Positive ($+$) & \textbf{Stable} (Maximum) \\
\textbf{Sinh-Gordon} & Subcritical ($-$) & Negative ($-$) & Concordant & Negative ($-$) & \textbf{Unstable} (Saddle) \\
\textbf{Exponential} & Supercritical ($+$) & Positive ($+$) & Concordant & Negative ($-$) & \textbf{Unstable} (Saddle) \\
\bottomrule
\end{tabular}
}
\vspace{0.2cm}
\caption{Summary of the formal tetrahedral stability analysis. Formal stability in the Arnold Energy-Casimir framework is given by the Crandall-Rabinowitz stability principle. A strictly positive perturbed eigenvalue ($\mu_2 > 0$) guarantees a negative-definite geometric Hessian. This stability is achieved if and only if the topological bifurcation amplitude ($\lambda_2$) and the spectral crossing rate ($\gamma'$) have opposite signs.}
\label{tab:stability_summary}
\end{table}

\textbf{Conflict of interest.} The author declares that he has no conflict of interest.

\textbf{Data availability.} Data sharing is not applicable.
We do not analyse or generate any datasets, because our work proceeds within a theoretical approach.

\bibliographystyle{plain}
\bibliography{bib}

\end{document}